\documentclass{amsart}
\usepackage{amsthm}
\usepackage{amsmath}
\usepackage{amssymb}
\usepackage{euscript}
\usepackage{graphicx}
\usepackage{pgf,tikz}
\usetikzlibrary{arrows}
\usepackage[singlelinecheck=false,justification=justified]{caption}

\usepackage{multicol}

\usepackage{pgfplots}
\usetikzlibrary{decorations.pathreplacing}

\usetikzlibrary{external} 
\usepackage{adjustbox}
\definecolor{color0}{HTML}{000000}
\definecolor{color1}{HTML}{000000}
\definecolor{bgColor}{HTML}{ffffff}
\usetikzlibrary {arrows.meta,bending}

\pgfplotsset{compat=1.18}

\newtheorem{theorem}{Theorem}[section]
\newtheorem{lemma}[theorem]{Lemma}

\newtheorem{definition}[theorem]{Definition}

\newtheorem{question}[theorem]{Question}

\numberwithin{equation}{section}

\begin{document}




\title[Graphoids]{Shadowing and inverse shadowing in spaces retractable onto graphs}

\author[J. Meddaugh]{Jonathan Meddaugh}
\address[J. Meddaugh]{Department of Mathematics, Baylor University, Waco TX, 76798}
\email[J. Meddaugh]{Jonathan\_Meddaugh@baylor.edu}

\author[E. Stephens]{Elyssa Stephens}
\address[E. Stephens]{Department of Mathematics, Baylor University, Waco TX, 76798}
\email[E. Stephens]{Ellie\_Stephens2@baylor.edu}

\thanks{The first author was supported by a grant from the Simons Foundation (960812, JM)}

\subjclass[2000]{37C20, 37C50, 37B65}
\keywords{shadowing, inverse shadowing, generic, graphoid}
\date{}

\begin{abstract}

In this paper, we show that shadowing and inverse shadowing are generic properties for spaces in which there exists an $\varepsilon$-retraction onto a graph for any $\varepsilon>0$.  

\end{abstract}
\maketitle

\section{Introduction}

It is often the case, in the analysis of a dynamical system, that orbits may only be approximated, resulting in apparent behaviors that exist only as artifacts of the approximation process, rather than as actual behaviors for the system. This is easily seen, even for iteration of simple, well-behaved maps in which rounding errors can rapidly compound. Dynamical systems for which approximate orbits are well modeled by true orbits from the system are said to have the \emph{shadowing property}. The study of this property began with the work of Bowen \cite{bowenshadowintro}. Since then, this property has been studied in a variety of contexts, including in many results in ergodic theory \cite{ergodiconcompact}, the study of $\omega$-limit sets \cite{Shadowingomegajulia}, and a complete classification of the subshifts of finite type \cite{Walters1978}.

Since maps with shadowing have many desirable properties, it is worth knowing whether a given map has shadowing. For Axiom A diffeomorphisms, this is completely classified by a transversality condition \cite{robinson1977stability}. In shift spaces, it is well-known that the subshifts with the shadowing property are precisely the subshifts of finite type \cite{Walters1978}. In other contexts, it is much more difficult to characterize the shadowing property. It therefore makes sense to ask, given a compact metric space $X$, whether a generic self-map might be \emph{expected} to have the shadowing property. In particular, given a compact metric space $X$, we would like to know if shadowing is a \emph{generic} property in $\mathcal C(X)$, the space of continuous self-maps on $X$. This question has been answered in the affirmative in a number of classes of spaces. The first results of this sort were for self-maps on the circle \cite{yano1987generic} and for more general manifolds of any dimension \cite{genericshadowingmanifolds1999}. Shadowing has also been shown to be a generic property of the space of continuous self-maps for more topologically complicated spaces, including dendrites \cite{Shadowinggenericdendrites}, chainable continua retractable to arcs \cite{bishadowgenericoprocha}, and locally connected one-dimensional continua \cite{genericshadowingmeddaugh}.  

One implication of the shadowing property is that if a map $f:X\to X$ has the shadowing property, then for each map $g$ sufficiently close to $f$ in $\mathcal C(X)$, each orbit of $g$ is reasonably approximated by an orbit of $f$ \cite{ShadowingasStructural}. In \cite{weakshadowignintroduced}, the authors explore the converse of this property, i.e. which maps $f$ have the property that for each map $g$ sufficiently close to $f$, every orbit of $f$ is reasonably approximated by an orbit for $g$. Maps satisfying this condition are said to have a form of \emph{inverse shadowing}. The authors of that paper, as well as others  \cite{Inverseshadowinggenericvarious} have refined and expanded that idea, replacing the notion of a \emph{nearby function} with the that of a \emph{$\delta$-method} coming from some predetermined class, $\mathcal T$, of methods, with functions satisfying the condition being said to have the \emph{$\mathcal T$-inverse shadowing property}. If a system has both the $\mathcal T$-inverse shadowing property and the standard shadowing property, then the system has the \emph{$\mathcal T$-bi-shadowing property}. If this property is generic in $C(X)$, then the dynamical systems on $X$ are relatively stable under perturbation. Following the same trajectory as the shadowing property, the $\mathcal T$-inverse shadowing property (for specific choices of family $\mathcal T$) has been shown to be generic in the class of self-maps for $C^\infty$-smooth manifolds \cite{weakshadowignintroduced} and in the space of self-homeomorphisms on a specific class of manifolds \cite{Inverseshadowinggenericvarious}, and hence $\mathcal T$-bi-shadowing is generic in these spaces as well. Further expanding these ideas, the authors of \cite{bishadowgenericoprocha} show that $\mathcal T$-bi-shadowing is generic for dendrites and chainable continua that can be approximated by arcs.

The main result of this paper is a significant generalization and refinement of the main result of \cite{genericshadowingmeddaugh}. In that paper, it is established that the shadowing property is generic in $\mathcal C(X)$ provided that $X$ is a locally connected continuum that can be retracted onto graphs in a sufficiently nice way, a class of continua that has now been shown to be equal to the class of locally connected one-dimensional continua \cite{peanovejnar} In this paper, we demonstrate that the hypothesis of local connectivity is not needed and that, for the specific class of $\delta$-methods called $\mathcal T_S$, the $\mathcal T_S$-inverse shadowing property, and hence also the $\mathcal T_S$-bi-shadowing property is also generic in these spaces.



\section{Preliminaries}

Let $X$ be a compact metric space and $\mathcal C(X)$ denote the set of all continuous function from $X$ to itself. Endow $\mathcal C(X)$ with the metric $\rho$ defined by $\rho(f,g)=\sup_{x\in X}d(f(x),g(x))$. A subset $P\subseteq \mathcal C(X)$ is called \emph{generic} if $P$ contains a dense $G_\delta$ subset of $\mathcal C(X)$. For $f\in \mathcal C(X)$ and $x\in X$, the \emph{orbit} of $x$ under the map $f$ is the sequence $\langle f^i(x)\rangle_{i\in \omega}.$ For $\delta>0$, a \emph{$\delta$-pseudo-orbit} of $f$ is a sequence $\langle x_i \rangle_{i\in \omega}$ such that $d(f(x_i),x_{i+1})<\delta.$ For $\varepsilon>0$, we say a point $z\in X$ \emph{$\varepsilon$-shadows} a sequence $\langle x_i \rangle_{i\in \omega}$ if $d(f^i(z),x_i)<\varepsilon$ for all $i\in\omega$.  
A mapping $f$ has the \emph{shadowing property} if for every $\varepsilon>0$ there exists $\delta>0$ such that every $\delta$-pseudo-orbit is $\varepsilon$-shadowed by some $x\in X$. For $n\in \mathbb N$, if $\varepsilon = 1/n$, let $\mathcal{P}_n(X)$ be the subset of $C(X)$ containing all mappings with the $1/n$-shadowing property and $\mathcal{P}(X)=\bigcap_{n\in \mathbb N}\mathcal{P}_n(X) $  the set of mappings with the shadowing property. 

The following definition was introduced in \cite{weakshadowignintroduced}, but we use the notation from \cite{Inverseshadowinggenericvarious}. Let $X^\omega$ be the set of all one-sided sequences in $X$. A \emph{$\delta$-method} for $f\in C(X)$ is a map $\chi :X\to X^\omega$ such that for all $x\in X$, $\chi(x)$ is a $\delta$-pseudo-orbit such that $\chi(x)_0=x$. The sequence $\chi(x)$ is called a \emph{$\delta$-pseudo-orbit produced by $\chi$}. A class $\mathcal{T}$ of $\delta$-methods is called \emph{complete} if for all $\delta>0$ there is at least one $\delta$-method $\chi\in\mathcal{T}$. Then for a complete class of $\delta$-methods $\mathcal{T}$, $f$ has the \emph{$\mathcal T$-inverse shadowing property} provided for every $\varepsilon>0$ there is some $\delta>0$ such that for any $\delta$-method $\chi\in \mathcal{T}$ and point $x\in X$, there is some $y\in X$ such that the sequence $\chi(y)$ $\varepsilon$-shadows the orbit of $x$. 
If $f$ has the shadowing property and the $\mathcal{T}$-inverse shadowing property, then we say $f$ has the \emph{$\mathcal{T}$-bi-shadowing property}.  Similar to the shadowing property, for $n\in \mathbb N$, let $\mathcal{Z}_n(X)$ be the subset of $C(X)$ containing all mappings with the $1/n$-inverse-shadowing property and $\mathcal{Z}(X)=\bigcap_{n\in \mathbb N}\mathcal{Z}_n(X) $  the set of mappings with the inverse shadowing property. 

In this paper, we consider only the class of $\delta$-methods $\mathcal{T}_S=\{\chi:$ there exists a family of continuous maps $\psi_n\in C(X)$ such that $\psi_n(\chi(x)_n)=\chi(x)_{n+1}$ for all $n\in \omega\}.$ Since there is only one class of $\delta$-methods under consideration, we will omit the $\mathcal T_S$ and refer to the $\mathcal T_S$-inverse shadowing property and the $\mathcal T_S$-bi-shadowing property as the inverse shadowing and bi-shadowing properties, respectively.


A \emph{graph} $G$ is a compact, connected space for which there exist arcs $I_1,...,I_n$ such that $G=\displaystyle\cup_{j=1}^nI_j$ and for $j\neq k$, $I_j\cap I_k$ is a subset of the set of endpoints of $I_j$. An \emph{endpoint} of $G$ is a point $x\in G$ that is an endpoint of only one such arc, while a \emph{branchpoint} is a point that is an endpoint of at least three such arcs.  If $I_j$ that does not contain any endpoints or branchpoints of G, and $I\subseteq I_j$ is an arc, then $I$ is called a \emph{free arc}. Without loss of generality, we will assume that, for $1\leq j\leq n$, each endpoint of $I_j$ is either an endpoint or a branchpoint of $G$. 

Following the terminology introduced in \cite{genericshadowingmeddaugh}, a space $X$ is called a \emph{graphoid} if for every $\varepsilon>0$, there exists a graph $G\subseteq X$ and retraction $r_G:X\to G$ such that $r_G$ is an $\varepsilon$-map (i.e. $diam(r^{-1}(x))<\varepsilon$ for each $x\in G$). The class of graphoids is an extension of the class of dendroids, as defined by Knaster in the following way: for every $\varepsilon>0$ there is a tree $T$ and retraction $r$ onto $T$ that is an $\varepsilon$-map.  It is worth pointing out that a dendroid is typically defined as an arcwise connected and hereditarily unicoherent continuum, but it is unknown if the two notions are equivalent \cite{pearl2011open}. The set of graphoids includes the topologist sine curve, the Menger curve, and the Sierpinski carpet, the harmonic fan, among others. In that paper, the author refers to locally connected graphoids as \emph{graphites}. The set of graphites was later proven to be be equal to the set of locally connected one dimensional continua \cite{peanovejnar}. However, it is not the case that every one-dimensional continuum is a graphoid. In particular, the pseudo-arc is not a graphoid as it does not contain any non-degenerate graphs. The dyadic solenoid is also not a graphoid, as the only non-degenerate graphs it contains are arcs. In \cite{genericshadowingmeddaugh}, the author showed that shadowing is generic in $C(X)$ for a graphite $X$ and proved the following lemma that will be useful for our proof that shadowing is generic for graphoids:

\begin{lemma}{(Lemma 2.2 \cite{genericshadowingmeddaugh})}\label{graphoidlemma22}
Let $X$ be a connected graphoid, and $f:X\to X$ a continuous mapping. For every $\eta >0$ there exists $\lambda>0$ such that if $G\subseteq X$ is a graph for which $r_G: X\to G$ is a $\lambda$-map, and $g:G \to X$ is in $B_\lambda(f|_G)$, then $g\circ r_G\in B_\eta(f).$
\end{lemma}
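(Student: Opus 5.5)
The argument rests on uniform continuity of $f$ on the compact space $X$, together with the observation that $r_G$ moves every point only a little when it is a $\lambda$-map.

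Fix $\eta>0$. Since $X$ is compact, $f$ is uniformly continuous, so we can choose $\mu>0$ with $d(x,y)<\mu$ implying $d(f(x),f(y))<\eta/2$. Set
\[
\lambda=\min\{\mu,\eta/2\}.
\]
Now let $G\subseteq X$ be a graph such that $r_G$ is a $\lambda$-map, and let $g:G\to X$ satisfy $\rho(g,f|_G)<\lambda$.

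The key step is to show that $d(x,r_G(x))<\lambda$ for every $x\in X$. Since $r_G$ is a retraction onto $G$, the point $r_G(x)$ lies in $G$ and is fixed by $r_G$. Hence both $x$ and $r_G(x)$ belong to the fibre $r_G^{-1}(r_G(x))$. This fibre has diameter less than $\lambda$, so $d(x,r_G(x))<\lambda\le\mu$.

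With this in hand, the estimate follows from the triangle inequality. For each $x\in X$,
\[
d\bigl(g(r_G(x)),f(x)\bigr)\le d\bigl(g(r_G(x)),f(r_G(x))\bigr)+d\bigl(f(r_G(x)),f(x)\bigr).
\]
The first term is less than $\lambda\le\eta/2$ because $r_G(x)\in G$ and $g$ is $\lambda$-close to $f|_G$. The second term is less than $\eta/2$ by the choice of $\mu$, since $d(r_G(x),x)<\mu$.

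One detail needs care. The supremum over $X$ could equal $\eta$ even though each pointwise distance is less than $\eta$. To obtain the strict inequality $\rho(g\circ r_G,f)<\eta$, first run the argument with $\eta/2$ in place of $\eta$ (that is, with the $\lambda$ it produces). Then $\rho(g\circ r_G,f)\le\eta/2<\eta$, so $g\circ r_G\in B_\eta(f)$.

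The only point that requires any thought is the bound $d(x,r_G(x))<\lambda$, which comes from $r_G$ being a retraction. Connectedness of $X$ is not actually needed for this argument.
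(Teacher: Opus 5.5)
Your proof is correct and matches the standard argument behind this lemma, which the present paper only cites from the earlier graphite paper. The argument: since $r_G$ is a retraction and a $\lambda$-map, $x$ and $r_G(x)$ lie in a common fibre, so $d(x,r_G(x))<\lambda$; uniform continuity of $f$ and the triangle inequality then give the estimate. Your side remarks also hold: connectedness of $X$ is not used, and strictness of $\rho(g\circ r_G,f)<\eta$ can be secured by halving $\eta$, or by compactness of $X$, since $x\mapsto d(f(r_G(x)),f(x))$ attains its maximum.
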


Lastly, we will need the following definitions about covers. Fix an open cover $\mathcal{U}$ of $X$. The \emph{nerve} of $\mathcal{U}$ is the simplicial complex with vertex set equal to the elements of $\mathcal{U}$ and in which the faces are the collections of element of $\mathcal{U}$ with common intersection. We say $\mathcal{U}$ is \emph{taut} if for all $U,V\in \mathcal{U}$ with $\overline{U}\cap\overline{V}\neq \emptyset$, then $U\cap V\neq \emptyset$, and no proper subset of $\mathcal{U}$ covers $X$. For $U\in \mathcal{U}$, the \emph{core} of $U$ is the portion of $U$ not contained in the closure of any other element of $\mathcal U$, i.e.\[core(U)=U\setminus \displaystyle\bigcup_{\substack{V\in \mathcal{U}\\ V\neq U}}\overline{V}.\]

The following notion will be useful for encoding information from the dynamical system using a finite open cover.

\begin{definition}
    For an open cover $\mathcal{U}=\{U_i,...,U_k\}$ of $X$ and $f\in \mathcal C(X)$, define \emph{the pattern of $f$ with respect to $\mathcal{U}$}, $\phi_{\mathcal{U},f}:\{1,...,k\}\to 2^{\{1,...,k\}} $ (or simply $\phi_f$ if $\mathcal{U}$ is unambiguous), by the following: For $i\in\{1,...,k\}$ let $\phi_{\mathcal{U},f}(i)= \{j:f(U_i)\cap U_j\neq \emptyset\}.$
\end{definition}

\section{Shadowing is Generic in $C(X)$ for Graphoids} \label{sec: Main}

Our theorem generalizes the main result in \cite{genericshadowingmeddaugh}, where the author showed that shadowing is generic for graphites. We utilize the same proof technique employed in \cite{genericshadowingmeddaugh}, as well as draw inspiration from \cite{bishadowgenericoprocha}. We will prove our theorem in a series of claims.

\begin{theorem}
    Bi-shadowing is generic in $\mathcal C(X)$ for every connected graphoid $X$. 
\end{theorem}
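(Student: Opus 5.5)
We will show that for each $n\in\mathbb N$ the sets $\mathcal P_n(X)$ and $\mathcal Z_n(X)$ each contain an open dense subset of $\mathcal C(X)$. Intersecting over $n$ then exhibits a dense $G_\delta$ set inside $\mathcal P(X)\cap\mathcal Z(X)$, because $\mathcal C(X)$ is a complete metric space and the Baire category theorem applies. Since both properties are defined by quantifying over all $\varepsilon=1/n$, the whole argument reduces to a local statement.

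\textbf{The local statement.} Fix $n$, $f\in\mathcal C(X)$ and $\eta>0$. We must find $h\in B_\eta(f)$ and $\gamma>0$ such that every map in $B_\gamma(h)$ has the $1/n$-shadowing and $1/n$-inverse shadowing properties.

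\textbf{Choosing the graph.} Apply Lemma \ref{graphoidlemma22} to $f$ and $\eta/2$ to obtain $\lambda$. Choose a graph $G\subseteq X$ with $r_G$ a $\min(\lambda,1/(4n))$-map. Take a fine taut open cover $\mathcal U=\{U_1,\dots,U_k\}$ of $G$ by small open arcs and stars around branchpoints. Arrange that each $U_i$ has nonempty core and that the sets $V_i=r_G^{-1}(U_i)$ form a taut cover of $X$ with mesh below $1/(2n)$.

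\textbf{Building $h$.} Define $g:G\to G$ within $\lambda$ of $r_G\circ f|_G$, and hence within about $\lambda$ of $f|_G$, with the following property. On each core there is a small closed arc $A_i\subseteq \mathrm{core}(U_i)$ such that for every $j$ with $f(U_i)$ meeting $U_j$ (that is, $j\in\phi_f(i)$ up to enlarging), $g(A_i)$ covers $A_j$ in a strongly expanding way. Meanwhile $g(\overline{U_i})$ is contained in the union of those $U_j$.

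Concretely, we piecewise-linearly modify $r_G\circ f$ on $A_i$ to sweep across all the required $A_j$. Where $f(U_i)$ is contained in a connected union of cover elements, $g(U_i)$ stays within the $U_j$ with $j\in\phi(i)$. Set $h=g\circ r_G$; Lemma \ref{graphoidlemma22} gives $h\in B_\eta(f)$.

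The key combinatorial property is then:
\begin{itemize}
\item[(a)] $h(\overline{V_i})\subseteq\bigcup_{j\in\phi(i)}V_j$, with room to spare, namely a positive gap $\gamma_0$;
\item[(b)] $h(r_G^{-1}(A_i))\supseteq r_G^{-1}(A_j)$ for each $j\in\phi(i)$, in the covering sense along $G$, robustly under perturbation.
\end{itemize}
Item (b) is robust because the image of $A_i$ in $G$ covers $A_j$ with overshoot. So for any $k$ with $\rho(k,h)<\gamma$, the map $r_G\circ k|_{A_i}$ still covers $A_j$ by the intermediate value theorem on arcs. This is where graphs, rather than general spaces, are essential: we only need connectedness of arcs in $G$, not local connectedness of $X$.

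\textbf{Verifying the properties for $k\in B_\gamma(h)$.} Choose $\gamma$ less than the gap $\gamma_0$ and less than the overshoot.

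For shadowing, take $\delta$ below the Lebesgue-type gap. Then a $\delta$-pseudo-orbit $\langle x_i\rangle$ of $k$ has an itinerary $x_i\in V_{s_i}$ with $s_{i+1}\in\phi(s_i)$, by (a). By (b), the nested closed sets
\[
r_G^{-1}(A_{s_0})\cap k^{-1}\bigl(r_G^{-1}(A_{s_1})\bigr)\cap\dots
\]
are nonempty. Here one passes to compact arcs in $G$ via $r_G$ and uses the covering property, choosing subarcs inductively. A point $z$ in the intersection satisfies $k^i(z)\in V_{s_i}$, so $d(k^i(z),x_i)<1/n$.

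For inverse shadowing, given $\chi\in\mathcal T_S$ a $\delta$-method of $k$ with continuous $\psi_m$, and a point $x$, the orbit of $x$ has an admissible itinerary by (a). Each $\psi_m$ is $\delta$-close to $k$ on the relevant points. To make this uniform, we use that $\psi_m(\chi(y)_m)$ is $\delta$-close to $k(\chi(y)_m)$ on all of $X$, since $\chi$ is defined everywhere. Then $\psi_m$ satisfies (b) on each $r_G^{-1}(A_i)$, and the same nested-intersection argument, applied to the compositions $\psi_{m}\circ\dots\circ\psi_0$, produces $y$ with $\chi(y)_m\in V_{s_m}$.

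\textbf{Main obstacle.} The main obstacle is the construction of $g$ satisfying (a) and (b) simultaneously. The cover needs enough tautness that images of cores can be stretched across all required cores while closures still map inside the allowed union. Some care is also needed in choosing the itinerary in the shadowing step: $x_i$ may lie in several $V_j$, and (a) must be phrased with the closure so that any choice of $s_{i+1}$ works. We would first try the approach of \cite{genericshadowingmeddaugh}, defining $g$ on the nerve-level graph and then linearly interpolating.
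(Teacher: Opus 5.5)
Your proposal is correct in outline, and it takes a genuinely different route from the paper. The paper fixes a margin $\eta$ before building $g$ and removes the metric neighbourhood $W_i=\bigcup_{l\notin\phi(i)}B_\eta(\overline{U_l})$ from $G$. It then works in the component of $G\setminus W_i$ containing $\overline{F(U_i)}$, and uses the Boundary Bumping Theorem to find usable core points of each required $U_j$ there. Since that component may meet $U_j$ in a piece that depends on $i$, the paper needs arcs $A^i_j$ indexed by pairs. In exchange it gets the stronger conclusion that the pattern is locally constant, $\phi_h=\phi_g$.

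You instead run the sweeping path $g|_{A_i}$ inside $\bigcup_{j\in\phi(i)}U_j$. This is an open connected, hence arcwise connected, subset of the graph $G$. It contains $F(\overline{U_i})$ for $F=r_G\circ f|_G$, and with $\phi=\phi_F$ taken relative to the cover of $G$ no enlargement is needed: a point of $\overline{F(U_i)}$ lies in some $U_j$, which then meets $F(U_i)$. So a single arc $A_j\subseteq\mathrm{core}(U_j)$ serves every predecessor. The gap $\gamma_0$ in (a) then comes for free, after $g$ is built, from compactness of $g(\overline{U_i})$ inside the $X$-open set $\bigcup_{j\in\phi(i)}r_G^{-1}(U_j)$. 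Openness of the good set only needs some $\gamma>0$, and you only use the one-sided containment (a) plus robust covering (b), never equality of patterns. That is exactly what dissolves the ``main obstacle'' you flag.

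Your inverse-shadowing step also differs, and it is the more reliable one. The paper estimates $d(\chi(x)_i,x_i)$ using $d(h^i(x),\chi(x)_i)<\delta$ for the same point $x$. That does not follow from $\chi(x)$ being a $\delta$-pseudo-orbit, because one-step errors compound. You instead choose a new point $y$ by rerunning the nested-arc argument along $\psi_m\circ\cdots\circ\psi_0$, using only $d(\psi_m(\chi(y)_m),k(\chi(y)_m))<\delta$.

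A few details need care when you write it out.

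\emph{The covering step.} It is not just the intermediate value theorem. You need $\epsilon_0>0$ with $B_{\epsilon_0}(\hat A_j)\cap G$ inside the interior of a free arc; this exists by compactness. You then need $\gamma+\delta$ small enough that points at distance less than $\gamma+\delta$ have $r_G$-images closer than both $\epsilon_0$ and the overshoot. At each stage you must also pass to a subarc mapped \emph{exactly} onto the next source subarc, endpoints to endpoints. Then the next image is again a small perturbation of a homeomorphism onto $\hat A_{s_{m+1}}$. Otherwise a connected set joining the two overshoot points could go around a cycle of $G$ and miss $A_{s_{m+1}}$.

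\emph{The statement of (b).} Phrase it for $r_G\circ k$ on subarcs of $A_i\subseteq G$. Literally, $h(r_G^{-1}(A_i))\subseteq G$ cannot contain $r_G^{-1}(A_j)$.

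\emph{The constants.} They need halving for Lemma~\ref{graphoidlemma22} to apply, e.g.\ $r_G$ a $\lambda/2$-map, so that $d(y,r_G(y))<\lambda/2$, and $g$ within $\lambda/2$ of $r_G\circ f|_G$.
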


\begin{proof}

Let $f\in C(X)$, $n\in \mathbb N$, and $0<\varepsilon<1/n$. 

We first show there exists $g\in C(X)$ with $g\in B_\varepsilon(f)$  and open cover $\mathcal{U}$ such that we have the following properties:
    \begin{itemize}
        \item[(i)]  For all $U\in \mathcal{U}$, $diam(U)<\varepsilon$.
        \item[(ii)] If $\phi_g$ is the pattern of $g$ with respect to $\mathcal{U}$, then for every sequence $\langle j_i \rangle_{i\in \omega}$ in $\{1,2,...,k\}^\omega$ so that $j_{i+1}\in \phi_g(j_i)$, there is a point $x\in X$ such that $g^i(x)\in U_{j_i}.$
        \item[(iii)] If $j\notin\phi_g(i)$, then $\overline{g(U_i)}\cap \overline{U_j}=\emptyset.$
    \end{itemize}

To that end let $0<\alpha<\varepsilon/5$ such that $d(x,y)<\alpha$ implies $d(f(x),f(y))<\varepsilon/5$. By Lemma \ref{graphoidlemma22}, find $0<\lambda<\alpha/3$ such that if $H\subseteq X$ is a graph, $r_H:X\to H$ is a $\lambda$-map, and $h:H\to X$ is in $B_\lambda(f|_H)$, then $h\;\circ\; r_H\in B_{\alpha/3}(f).$ Also, let $\lambda$ be sufficiently small so that if $d(x,y)<\lambda$, then $d(f(x),f(y))<\alpha/3$. Lastly, we let $0<\gamma<\lambda/6$ be such that $d(x,y)<\gamma$ implies $d(f(x),f(y))<\lambda/6$.

Now let $G\subseteq X$ be a connected graph and $r:X\to G$ be a retraction such that $r$ is a $\gamma$-map. Find a taut open cover $\mathcal{V}=\{V_1,V_2,...,V_k\}$ such that for all $1\leq i\leq k$ we have $diam(V_i)<\gamma$, $V_i$ is connected, $core(V_i)=V_i\setminus\bigcup_{j\neq i}\overline{V_j}\neq \emptyset$, and the nerve of the $\mathcal{V}$ is a graph with no three cycles. Let $\mathcal{U}=\{U_1,U_2,...U_k\}$ be the open cover of $X$ defined by $U_i=r^{-1}(V_i)$ for all $0\leq i\leq k$. We claim $\mathcal{U}$ satisfies (i)-(iii). Note that for all $0\leq i\leq k$, we have that $diam(U_i)<3\gamma<\varepsilon$ by the triangle inequality, satisfying (i). Also, note that the nerve of $\mathcal{U}$ is isomorphic to that of $\mathcal{V}$, so $\mathcal{U}$ contains no three cycles and $core(U_i)\neq\emptyset.$ 

Let $F=r\circ f\circ r$. Since $r$ is a $\gamma$-map and $r\circ f|_G\in B_\gamma(f|_g)\subseteq B_\lambda(f|_G)$, by Lemma \ref{graphoidlemma22}, we have $r\circ f\circ r\in B_{\alpha/3}(f).$ Since $\alpha/3<\varepsilon/2$, we have $F\in B_\varepsilon(f).$ Also, let $K_i=\overline{F(U_i)}.$ Note that $K_i$ is a closed, connected subgraph of $G$. Let $\phi_F$ be the pattern of $F$ with respect to $\mathcal{U}$ and $\phi(i)=\{j:U_j\cap U_l\neq \emptyset\; \text{for } l\in \phi_f(i)\}$. Note that $\phi \supseteq \phi_F$. We will define the function $g$ using $F$.

Now fix $\eta <\gamma/4$ such that the following hold: 

\begin{enumerate}
    \item[(1)] If $U_i\cap U_j= \emptyset$, then $d(\overline{U_i},\overline{U_j})> 4\eta$.
    \item[(2)]For all $i\leq k,$ $\{x\in U_i:B_{4\eta}(x)\subseteq core(V_i)\}\neq \emptyset$.
    \item[(3)] For all $i,j\leq k$ with $U_i\cap U_j\neq \emptyset$, $\{x\in U_i:B_{4\eta}(x)\subseteq U_i\cap U_j\} \neq \emptyset$.
\end{enumerate}

We now show the existence of free arcs, which will serve as the foundation upon which we will construct $g$. For all $1\leq i \leq k$, define $W_i= \bigcup_{l\notin \phi(i)} B_\eta(\overline{U_l}) $ and let $C_i$ be the component of $G\setminus W_i$ containing $K_i$. Let  $G_i = \bigcup_{j\in \phi(i)} C_j $. Then $G_i$ is connected and if $j\notin \phi(i)$, $d(G_i,\overline{U_j})>\eta.$ We claim that if $j\in \phi(i)$, then $\{x\in G_i : B_\eta(x)\subseteq core(U_j)\}$ has nonempty interior in $G$. To see this, fix $1\leq i\leq k$ and suppose that $j\in \phi(i)$ such that $U_j\cap W_i=\emptyset$. Then $W_i\cap V_j=\emptyset$, so $V_j\subseteq G_i$ and we have that $\{x\in G_i : B_\eta(x)\subseteq core(U_j)\}$ has nonempty interior in $G$. Otherwise, $U_j\cap W_i\neq \emptyset.$ Let $t\in \phi(i)$ be such that $U_j\cap U_t\neq \emptyset$. Let $Q$ be a component of $G\cap U_j\setminus W_i$ that meets $G\cap \overline{U_j}\cap \overline{U_t}$. Note that $G\cap \overline{U_j}\cap\overline{U_t}\subseteq G_i$ since $G\cap U_t\subseteq G_i$. By the Boundary Bumping Theorem, we have that $Q\cap \overline{W_i}\neq \emptyset$. Then $Q$ contains points in both $\overline{U_t}$ and $\overline{W_i}$, but $B_\eta(\overline{U_s})\cap B_\eta(\overline{W_i})=\emptyset.$ Applying the triangle inequality, we have $\{x\in G_i: B_\eta(x)\subseteq core(V_j)\}$  has nonempty interior in $G$.

Now for all $1\leq i \leq k$, in order to construct $g$, we will select free arcs in each $G_i$. For each $1\leq i \leq k$ and $j\in \phi(i)$, we have that the set $\{x\in G_i: B_\eta(x)\subseteq core(U_j)\}$ has nonempty interior in $G$. Choose a collection $\{A^i_j: 1\leq i\leq k,\; j\in \phi(i)\}$ of pairwise disjoint free arcs such that $A_j^i\subseteq \{x\in G_i: B_\eta(x)\subseteq core(U_j)\} $. For indexing purposes, also choose a collection $\{A_0^i: 1\leq i\leq k \}$ of pairwise disjoint free arcs such that $A^i_0\subseteq U_i\cap G$ and $B_\eta(A_0^i)\subseteq core(U_i)$. We may assume, without loss of generality, that $\mathcal A =\{A^i_j: 1\leq i\leq k, j\in \phi(i)\}\cup  \{A_0^i: 1\leq i\leq k \}$  is pairwise disjoint. Then for each $0\leq i\leq k$ and $j\in \phi(i)\cup \{0\}$, we have that $B_\eta(A_j^i)\subseteq core(U_j)$, so $d(A_j^i,U_t)<\eta$ implies $t=j$. Also, if $x\in U_t$ and $r(x)\in A_j^i$, then $t=j.$

We will now construct $g.$ For all $A_j^i\in \mathcal{A}$, define $g_j^i:A_j^i\to G_i$ in the following way. Let $b^i_{j,0}$ and $a^i_{j,k+1}$ be the endpoints of $A_j^i$. Divide the arc $A_j^i$ into $k$ many subarcs such that for $1\leq t\leq k$ we have endpoints $b^i_{j,t}$ and $a^i_{j,t}$ ordered so that $$ b^i_{j,0}<a^i_{j,1}<b^i_{j,1}<\cdots <b^i_{j,k}<a^i_{j,k+1}. $$ Let $g_j^i$ be defined by:
\begin{enumerate}
    \item[(1)] $g_j^i(b^i_{j,0})= F(b^i_{j,0}) $
    \item[(2)] $g_j^i(a^i_{j,k+1})= F(a^i_{j,k+1}) $
    \item[(3)] For all $t\in \phi(i)$, let $g_j^i:[a^i_{j,t},b^i_{j,t} ]\to A_t^j  $ be a homeomorphism. 
    \item[(4)] For any $1\leq t,s\leq k$ upon which $(g_j^i(b^i_{j,t}),g^i_j(a^i_{j,s}))$ is undefined, let $g_j^i:[b^i_{j,t},a^i_{j,s} ]\to [g_j^i(b^i_{j,t}),g^i_j(a^i_{j,s}) ]$ be a homeomorphism onto a subarc of $G_i$ from $g_j^i(b^i_{j,t})$ to $g^i_j(a^i_{j,s})$. 
\end{enumerate}

Now let $g:X\to X$ be defined by 

$$g(x)=\begin{cases}
    g_j^i(x)\; &r(x)\in \mathcal{A}\\
    F(x) \; &r(x)\notin \mathcal{A}.
\end{cases}$$

Then $g$ is continuous by construction and if $r(x)\notin \mathcal{A}$, we have $d(g(x),f(x))=d(F(x),f(x))<\varepsilon.$ Otherwise, it remains to show that for $r(x)\in \mathcal{A}$, $d(g(x),f(x))<\varepsilon.$ To that end, fix $r(x)\in \mathcal{A}$. Then $r(x)\in A^i_j$ for some $1\leq i\leq k$ and $j\in \phi(i)\cup \{0\}$. Then $d(x,A^i_j)<\gamma$ since $r$ is a $\gamma$-map. Then $x\in B_\gamma(A^i_j)\subseteq \bigcup_{l\in\phi(i)}B_{\lambda/3}(U_l).$ Since $g(x)=g_j^i(r(x))\in g(A^i_j)\subseteq G_i\subseteq \bigcup_{l\in\phi(i)}U_l$ and $diam(U_i)<\lambda/2<\alpha/6$ for all $1\leq i\leq k$, we have $$d(g(x),f(x))\leq diam\left( \bigcup_{l\in \phi(i)} U_l \right) +2\lambda/3<diam\left( \bigcup_{l\in \phi(i)} U_l \right)+\varepsilon/2.$$ Since $d(z,y)<\lambda$ implies $d(f(z),f(y))<\varepsilon/5$, then $diam(F(U_i))<\varepsilon/5. $ For $l\in \phi(i)$, there is some $t\in \phi_F(i)$ such that $U_l\cap U_t \neq \emptyset$ and $U_t\cap f(U_i) \neq \emptyset$. Then $$diam\left( \bigcup_{l\in \phi(i)} U_l \right) \leq diam(F(U_i))+2\max \{ diam(U_l):1\leq l\leq k\}<\varepsilon/5 +\alpha/3<\varepsilon/2.$$ Then $d(f(x),g(x))<\varepsilon$, so $\rho(f,g)<\varepsilon.$

Let $\phi_g$ be the pattern of $g$ with respect to $\mathcal{U}.$ We must show $\phi=\phi_g$ in order to prove (ii) and (iii). To that end, fix $1\leq i\leq k$ and $j\in \phi(i)$. Then there is some $0\leq t\leq k$ such that $U_j\cap U_t\neq \emptyset.$ Then $g(A_j^i)\supseteq A_t^i$. By construction, $g(U_j)\cap U_i \neq \emptyset$, so $j\in \phi_g(i)$ and hence $\phi\subseteq \phi_g$. On the other hand, fix $j\in \phi_g(i).$ Find $x\in U_j$ such that $g(x)\in U_i.$ If $r(x)\in A_j^t$ for some $1\leq t\leq k$, then $t=i$, so $U_j\cap U_i\neq \emptyset$. Otherwise, $r(x) \notin \mathcal{A},$ so $g(x)=F(x)$. In either case, we have $j\in \phi(i)$, so $\phi(i)=\phi_g(i).$ Hence, $\phi=\phi_g.$ 

To prove $g$ satisfies (ii), let $\langle j_i\rangle_{i\in \omega}$ be a sequence in $\{1,...,k\}^\omega$ such that $j_{i+1}\in \phi_g(j_i) $ for all $i\in \omega.$ Let $j_{-1}=0. $ For each $i\in \omega$, by construction, we have $g(A_{j_{i+1}}^{j_{i}})\supseteq A_{j_{i}}^{j_{i+1}}$. By compactness, we may find $x\in\displaystyle \bigcap_{i\in \omega} g^{-i}(A^{j_i}_{j_{i-1}})\neq \emptyset.$ Then for all $i\in \omega,$ $g^i(x) \in A^{j_i}_{j_{i-1}}\subseteq U_{j_i}.  $ 

To prove (iii), we prove the contrapositive. Suppose $x\in \overline{g(U_i)}\cap \overline{U_j}.$ Then there is a sequence $g(z_n)\to x$ in $g(U_i)$ and a sequence $y_n\to x$ in $U_j.$ Let $\beta= diam(U_j)$. Find $N$ such that $n\geq N$ implies $d(g(x_n),x),\; d(y_n,x)<\beta/3$. Then $d(y_N,g(x_N))<\beta$, so $g(x_N)\in U_j$. Therefore, $g(U_i)\cap U_j\neq \emptyset$, so $j\in \phi_g(i).$ Therefore, the mapping $g$ and cover $\mathcal{U}$ together satisfy (i)-(iii).

We now claim that there exists $\beta>0$ such that for all $h\in C(X)$ with $\rho(h,g)<\beta$, if $\phi_h$ is the pattern of $h$ with respect to $\mathcal{U}$, we have $\phi_h=\phi_g$, and for all $\langle j_i\rangle_{i\in \omega} $ in $\{1,...,k\}^\omega$ with $j_{i+1}\in \phi_h(j_i)$, there exists $x\in X$ such that $h^i(x)\in U_{j_i}$.

Note that by (iii), for $1\leq i\leq k$, $\overline{g(U_i)}\cap \overline{U_j}\neq \emptyset$ if and only if $j\in \phi_g(i)$. Then for all $1\leq i\leq k$ we may find $\eta>\tau_i>0$ such that $d(\overline{g(U_i)},\overline{U_j})<\tau_i$ if any only if $j\in \phi_g(i).$ Let $\tau=\displaystyle\min_{0\leq i\leq k}\{\tau_i\}$. If $\rho(h,g)<\tau$, then $h(\overline{U_i})\subseteq \bigcup_{j\in \phi_g(i)}\overline{U_j}\setminus W_i.$ Therefore, $\phi_h(i)\subseteq \phi_g(i)$, so $\phi_h\subseteq \phi_g$. 

To show $\phi_g\subseteq \phi_h$, let $\tau>\xi >0$ such that for all $1\leq i,j\leq k$ such that $A^i_j\in \mathcal{A}$, we have $B_\xi(A^i_j)\cap G$ is an arc such that $\xi$ is strictly less than the distance from $[a_{j,1}^i, b_{j,k}^i]$ to either endpoint of $A_j^i$. Then, if $a,b\in G$ such that, without loss of generality, $d(b, b_{j,0}^i)<\xi$ and $d(a,a_{j,k+1}^i )<\xi$, then the arc from $b$ to $a$ is contained in $B_\xi(A_j^i)\cap G$ and contains $[a_{j,1}^i, b_{j,k}^i]$. Choose $\xi>\beta>0$ such that if $d(a,b)<\beta$, then both $d(r(x),r(y))<\xi$ and $d(g(x),g(y))<\xi$. We will show that $\beta$ satisfies the claim. 

Fix $h\in C(X)$ with $\rho(h,g)<\beta$. Then $\phi_h\subseteq\phi_g$ since $\beta<\tau$. To show $\phi_g\subseteq\phi_h$, fix $1\leq i\leq k$ and find $j$ such that $A_j^i\subseteq U_j$. Fix $t\in \phi_g(i)$ and $x\in [b_{j,t}^i,a_{j,t}^i].$ Then $h(x) \in B_\beta(g(x))\subseteq B_\eta(A_t^j)\subseteq core(U_j)$, so $j\in \phi_h(i)$ and we have $\phi_g=\phi_h$.

We will prove the second part of our claim by induction. Fix $\langle j_i\rangle_{i\in \omega} $ in $\{1,...,k\}^\omega$ with $j_{i+1}\in \phi_h(j_i)$. Let $I_0= [a_{0,j_1}^{j_0}, b_{0,j_1}^{j_0}] \subseteq A_0^{j_0}.$ Since $\rho(h,g)<\beta$, then $$d(r(h(a_{0,j_1}^{j_0})), r(g(a_{0,j_1}^{j_0})))  <\xi.$$  By the definition of $g$, we have that $ r(g(a_{0,j_1}^{j_0}))=b_{j_0,0}^{j_1}$, so $d(h(a_{0,j_1}^{j_0}),b_{j_0,0}^{j_1})<\xi.$ A similar argument shows that $d(h(b_{0,j_1}^{j_0}), a^{j_1}_{j_0,k+1})<\xi.$ Therefore, we have that the arc from $h(a_{0,j_1}^{j_0})$ to $h(b_{0,j_1}^{j_0})$ is completely contained in $B_\xi (A^{j_1}_{j_0})\cap G$ and also $r(h(I_0))\supseteq [b_{j_0,1}^{j_1},a^{j_1}_{j_0,k}]$. Therefore, since $r\circ h$ is continuous, $r(h(I_0))$ is connected and we may find some $I_1\subseteq I_0\subseteq U_{j_0}$ such that $r(h(I_1))= [b_{j_0,1}^{j_1},a^{j_1}_{j_0,k}]$.

Now suppose there exists $N\in \mathbb{N}$ such that $I_n$ is a subarc of $I_{n-1}$ such that $h^n(I_n)\subseteq B_\beta(A_{j_{n-1}}^{j_n})$, $r(h^n(I_n))\subseteq [b_{j_{n-1,1}}^{j_n},a_{j_{n-1,k}}^{j_n}]$, and $h^{n-1}(I_n)\subseteq U_{j_n-1}$ for all $n<N$. By the continuity of $r\circ h$, fix $a,b\in I_n$ so that $r(h^n(a))= a^{j_n}_{j_{n-1}, j_{n+1}} $ and $r(h^n(b))= b^{j_n}_{j_{n-1}, j_{n+1}} $ so that $r(h^n([a,b]))= [a^{j_n}_{j_{n-1}, j_{n+1}} ,b^{j_n}_{j_{n-1}, j_{n+1}} ]$. Then for all $y\in [a,b] $, $d(h^{n+1}(y),g(h^n(y)))<\beta$ and $g(h^n(x))\in g([a^{j_n}_{j_{n-1}, j_{n+1}} ,b^{j_n}_{j_{n-1}, j_{n+1}} ])= A^{j_{n+1}}_{j_n}.$ Therefore, $d(h^{n+1}(y),A^{j_{n+1}}_{j_{n}})<\beta $, so $h^{n+1}([a,b])\subseteq B_\beta(A^{j_{n+1}}_{j_n}).$

Since $r$ is a $\gamma$-map and a retraction, we have that $h^n([a,b])\subseteq B_\lambda(A^{j_n},{j_{n-1}})\subseteq U_{j_n}$. Note that $g(h^n(a))= g(r(h^n(a))= g(a^{j_n}_{j_{n-1}, j_{n+1}})= b^{j_{n+1}}_{j_{n},0}.$ Then we have $d(h^{n+1}(a), b^{j_{n+1}}_{j_{n},0})<\beta$ and $r(b^{j_{n+1}}_{j_{n},0})=b^{j_{n+1}}_{j_{n},0}$, so $d(r(h^{n+1}(a)),r(b^{j_{n+1}}_{j_{n},0}))<\xi$. By a similar argument as before, we have $d(r(h^{n+1}(b), a^{j_{n+1}}_{j_{n},k+1})<\xi.$ 

Therefore, we have that the arc from $h^{n+1}(a)$ to $h^{n+1}(b)$ is completely contained in $B_\xi (A^{j_{n+1}}_{j_n})\cap G$ and also $r(h(I_n))\supseteq [b^{j_{n+1}}_{j_{n},0},a^{j_{n+1}}_{j_{n},k+1}]$. Therefore, since $r\circ h$ is continuous, $r(h(I_n))$ is connected and we may find some $I_{n+1}\subseteq I_n\subseteq U_{j_n}$ such that $r(h(I_{n+1}))= [b^{j_{n+1}}_{j_{n},0},a^{j_{n+1}}_{j_{n},k+1}]$. Since $I_{n+1}\subseteq[a,b]$, we have $h^{n+1}(I_{n+1})\subseteq B_\gamma(A^{j_{n+1}}_{j_n})$ and $h^n(I_{n+1})\subseteq U_{j_n}.$

Therefore, if $x\in I_n$, $h^i(x)\in U_{j_i}$ for all $0\leq i\leq n$. By compactness , we may find $z\in \bigcap_{n\in \omega}I_n \neq \emptyset$, and $h^i(z)\in U_{j_i} $ for all $i\in \omega$, which proves our claim.

We may now show $B_\beta(g)\subseteq \mathcal{P}_n(X)$.

Let $h\in B_\beta(g)$ and choose $0<\delta<\alpha/3$ to be the Lebesgue number for $\mathcal{U}$. Let $\langle x_i \rangle _{i\in \omega}$ be a $\delta$-pseudo-orbit in $(X,f\circ r)$. Then for all $i\in \omega$, it follows that $diam(\{x_i, f(r(x_{i-1})) \})<\delta<\alpha/3.$ Fix $j_i\leq k$ so that $x_i,f(r(x_{i-1}))\in U_{j_i}$ and $j_0$ so that $x_0\in U_{j_0}$. Then for all $i \in \omega$, $j_{i+1}\in \phi_h(j_i)$, so by the previous claim, there exists $x\in X$ such that for all $i\in \omega$ we have $h^i(x)\in U_{j_i}.$ Then $x_i,h^i(x)\in U_{j_i}$, so $d(x_i,h^i(x))<diam(U_{j_i})<\alpha/3<\varepsilon<1/n,$ so $x$ $1/n$-shadows $\langle x_i \rangle _{i\in \omega}$. Therefore, $h\in \mathcal{P}_n(X).$ Hence, $B_\beta(g)\subseteq \mathcal{P}_n(X)$.

Since $n\in \mathbb N$ was arbitrary, by our above claims, we have that for all $\varepsilon>0$, there is some $n\in \mathbb N$, $0<\beta_n<\varepsilon$, and $g\in B_\varepsilon(f)$ such that $ B_{\beta_n}(g)\subseteq \mathcal{P}_n(X)$. Therefore $\mathcal{P}_n(X)$ contains a dense open subset of $C(X)$ and $\mathcal{P}(X)\supseteq \bigcap_{n\in \mathbb N}\mathcal{P}_n(X). $ Hence, $\mathcal{P}(X)$ contains a dense $G_\delta$ subset of $C(X)$.

It remains to show that $\mathcal{Z}(X) $ contains a dense $G_\delta$ subset of $C(X).$ To show this, let $\delta>0$ be the Lebesgue number for $\mathcal{U}$, as before and $h\in B_\beta(g)$. We claim that for every $\delta$-pseudo-orbit in $(X,h)$, $\langle x_i\rangle_{i\in \omega}$, and every $\delta$-method $\chi\in\mathcal{T}_{S,h,\delta}, $ we have $\langle x_i\rangle_{i\in \omega}$ is $1/n$-shadowed by some pseudo-orbit produced by $\chi$ and some orbit of $h$.

Let $\chi\in\mathcal{T}_{S,h,\delta}$ be a $\delta$-method for $h$. Let $\psi_i\in C(X)$ be a sequence in $C(X)$ such that for all $i\in \omega$, $\rho(h,\psi_i)<\delta$ and $\chi(z)_{i+1}= \psi_i(\chi(z)_i)$ for all $z\in X$. Let $\psi_{-1}=id_X.$ Now let $\langle x_i\rangle_{i\in \omega}$ be a $\delta$-pseudo-orbit for $h$. By our previous claim, we have there exists $x\in X$ such that $h^i(x) \in U_{j_i}$ for all $i\in \omega$, so $h$ has the shadowing property. Let $\chi(x) = \langle (\psi_{i-1}\circ\psi_{i-2}\circ\cdots \circ \psi_{-1})(x)\rangle_{i\in\omega} $ be a $\delta$-pseudo-orbit produced by $\chi.$ Then $d(\chi(x)_i,x_i)\leq d(x_i,h^i(x))+d(h^i(x),(\psi_{i-1}\circ\cdots \circ \psi_{-1})(x))<\varepsilon/3+\delta<\varepsilon$, so $h$ has $\mathcal{T}_S$-inverse shadowing and, hence, $\mathcal{T}_S$-bi-shadowing.

    We will now show that $\mathcal{Z}$ contains a dense $G_\delta$ subset of $\mathcal C(X)$ Since $\varepsilon>0$ was arbitrary, we have for all $\varepsilon>0$ there is some $0<\beta_N<\varepsilon$ and $\delta>0$ such that for $h\in B_{\beta_N}(f)$ we have for every $\delta$-pseudo-orbit, $\langle x_i\rangle_{i\in \omega}$, and every $\delta$-method $\chi\in\mathcal{T}_{S}, $ we have $\langle x_i\rangle_{i\in \omega}$ is $1/N$-shadowed by some pseudo-orbit produced by $\chi$ and some orbit of $h$. Hence $h\in \mathcal{Z}_N(X)$. Therefore, for all $n\in \mathbb N$ $,\mathcal{Z}_n(X)$ contains a dense open subset of $\mathcal{Z}.$ Then $\mathcal{Z}(X)\supseteq \bigcap_{n\in \mathbb N}\mathcal{Z}_n(X).$ Hence, $\mathcal{Z}(X) $ contains a dense $G_\delta$ in $C(X).$
\end{proof}

\section{Conclusion}
The results in Section \ref{sec: Main} concern only the subspace $\mathcal C(X)$ of continuous self-maps from $X$ to $X$. Previous work on the question of genericity also consider the subspaces $\mathcal S(X)$ and $\mathcal H(X)$, consisting of continuous self-surjections and continuous self-homemorphisms, respectively. In our construction, the mappings are almost certainly not surjective. The methods used in \cite{genericshadowingmeddaugh} to establish that shadowing is generic in $S(X)$, where $X$ is a graphite, rely heavily on the local connectivity of graphites, and so the argument cannot be generalized. Concerning $\mathcal H(X)$, the authors of \cite{Inverseshadowinggenericvarious} show that $\mathcal{T}_S$-inverse shadowing is generic in $\mathcal H(X)$ for a specific class of manifolds, as well as a for a different class of $\delta$-methods, but their arguments depend heavily on the relative homogeneity of manifolds. Indeed, it is not surprising that addressing genericity of shadowing in either $\mathcal S(X)$ or $\mathcal H(X)$ for a graphoid (or other non-locally connected space) is difficult due to the local inhomogeneities of these spaces. This leads is to conclude with the following questions.

\begin{question}
    Is there a space $X$ for which shadowing is generic in $S(X)$ that is not locally connected?
\end{question}

\begin{question}
    Is there a space $X$ for which $\mathcal{T}_S$-bi-shadowing is generic in $H(X)$ that is not locally connected?
\end{question}

In this paper, and indeed in much of the literature concerning genericity of shadowing and inverse shadowing, it is the case that both are generic properties \cite{Inverseshadowinggenericvarious, genericshadowingmeddaugh, yano1987generic}.

\begin{question}
    Does there exist a space $X$ and class of $\delta$-methods $\mathcal{Q}$ in which $\mathcal{Q}$-inverse shadowing is generic in $C(X)$, but shadowing is not generic, or vice versa?
\end{question}

\bibliographystyle{plain}
\bibliography{Bibliography}

\end{document}